\newtheorem{theo}{Theorem}[section]
\newtheorem{prop}{Proposition}[section]
\newtheorem{coro}{Corollary}[section]
\theoremstyle{definition}
\theoremstyle{remark}
\newtheorem{rema}{Remark}[section]
\begin{document}

\title{\large{\textbf{Non-Einstein relative Yamabe metrics}}}

\author{Shota Hamanaka\thanks{supported in doctoral program in Chuo University, 2020.}}

\date{}

\maketitle

\begin{abstract}
In this paper, we give a sufficient condition for a positive constant scalar curvature metric on a manifold with boundary to be a relative Yamabe metric,
which is a natural relative version of the classical Yamabe metric. 
We also give examples of non-Einstein relative Yamabe metrics with positive scalar curvature.
\end{abstract}

\section{Introduction}
\footnote[0]{2010 {\it Mathematics Subject Classification.}~~Primary 53C; Secondary 57R, 58E. }
\footnote[0]{{\it Key words and phrases.}~~relative Yamabe metrics; constant scalar curvature metrics.}
~~Let $M$ be a compact connected smooth manifold of dimension $n \ge 3.$
Let $\mathscr{M}$ be the space of all Riemannian metrics on $M$ and 
$\mathcal{C}(M)$ the set of all conformal classes on $M.$
We consider the normalized Einstein-Hilbert functional $\mathcal{E}$ on the space $\mathscr{M}:$
\[
\mathcal{E}~:~\mathscr{M} \rightarrow \mathbb{R},~~g \mapsto \mathcal{E}(g) := \frac{\bigintss_{M} R_{g}dv_{g}}{\mathrm{Vol}_{g}(M)^{\frac{n-2}{n}}},
\]
where $R_{g},~dv_{g},~\mathrm{Vol}_{g}(M)$ denote respectively the scalar curvature of $g$, the volume measure of $g$ and the volume of $(M,g)$.
In the case of $\partial M = \emptyset,$ for $C \in \mathcal{C}(M),$
we define a number $Y(M,C) := \inf_{g \in C} \mathcal{E}(g)$ and it is called the \textit{Yamabe constant} of $C.$
And a metric $g \in C$ which achieves this infimum is called a \textit{Yamabe metric}
and has constant scalar curvature.
Yamabe, Trudinger, Aubin and Schoen have proved that any conformal class $C$ contains a Yamabe metric.
Conversely, a metric $g$ with constant scalar curvature is a Yamabe metric if either $R_{g} \le 0$
or $g$ is an Einstein metric(\cite{obata1962certain}, \cite{obata1971conjectures}).
On the other hand, Kato \cite{shin1994examples} gave a sufficient condition for a metric to be a Yamabe metric
and examples of non-Einstein Yamabe metrics with positive scalar curvature.

For the case of $\partial M \neq \emptyset,$
Yamabe metrics under minimal boundary condition have been studied (cf. \cite{akutagawa2001notes}, \cite{akutagawarelative}, \cite{escobar1992yamabe}).
For $\Bar{C} \in \mathcal{C}(M),$ we set $\Bar{C}_{0} := \bigl\{ g \in \Bar{C}~|~H_{g} = 0~\mathrm{on}~\partial M \bigr\},$
that is, the space of all \textit{relative metrics} in $\Bar{C},$
where $H_{g}$ denotes the mean curvature of $\partial M$ in $(M, g).$
A relative metric $g \in \Bar{C}_{0}$ is called a \textit{relative Yamabe metric}
if $g$ is a minimizer of $\mathcal{E}|_{\Bar{C}_{0}}.$
The infimum of $\mathcal{E}|_{\Bar{C}_{0}}$ is called the {\it relative Yamabe constant} of $\Bar{C},$
which is denoted by $Y(M, \partial M, \Bar{C}).$
Like the case of $\partial M = \emptyset,$
a relative metric $g$ with constant scalar curvature is a relative Yamabe metric if $R_{g} \le 0.$
Moreover, it is known that some Obata-type theorems hold under a suitable boundary condition (see \cite{akutagawa2}, \cite{escobar1990uniqueness} and Section 2 in this paper).

Our main result of this paper is a relative version of Kato's result in \cite{shin1994examples}.
More precisely, we will give the following sufficient condition for a relative metric of constant scalar curvature
to be a relative Yamabe metric,
and examples of non-Einstein relative Yamabe metrics with positive scalar curvature.
Our main result is the following:

\begin{theo}
\label{theo1}
Let $g$ be a relative Yamabe metric on a compact connected smooth manifold $M$
of dimension $n \ge 3$ with non-empty smooth boundary $\partial M$
with $R_{g} > 0$ on $M.$ 
Assume that $h$ is a relative metric on $M$ with constant scalar curvature
and that $\varphi$ is a diffeomorphism of $M$ such that
$dv_{\varphi^{*}h}=\gamma dv_{g}$ for some positive constant $\gamma$. 
If 
\begin{equation}
\label{eq:eq1}
R_{h}h \le R_{g}g,
\end{equation}
then $h$ is also a relative Yamabe metric.
Moreover, if 
\begin{equation}
\label{eq:eq2}
R_{h}h < R_{g}g,
\end{equation}
then $h$ is a unique relative Yamabe metric (up to positive constant) in the relative confomal class 
$[h]_{0}$ of $h$.
Here,
$[h]_{0} := \bigl\{  g \in [h] \bigm| H_{g} = 0~\mathrm{on}~\partial M \bigr\}
= \bigl\{ u^{\frac{4}{n - 2}} \cdot h \bigm| u \in C^{\infty}_{+}(M),~\nu_{h} (u) = 0~\mathrm{on}~\partial M \bigr\},$
where $\nu_{h}$ denotes the inward unit normal vector field of $\partial M$ with respect to $h$
in $M.$
\end{theo}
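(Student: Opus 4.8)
The plan is to show directly that $h$ minimizes $\mathcal{E}$ over its relative conformal class $[h]_{0}$, using as the only external input the fact that $g$ already minimizes over $[g]_{0}$. Since both $\mathcal{E}$ and the relative Yamabe constant are diffeomorphism invariants, I would first replace $h$ by $\varphi^{*}h$; this is harmless because $h$ is a relative Yamabe metric if and only if $\varphi^{*}h$ is, and it turns the hypotheses into pointwise statements on $M$, namely $dv_{h}=\gamma\,dv_{g}$ as densities and (after the same transport) $R_{h}h\le R_{g}g$ as quadratic forms. From now on I assume $\varphi=\mathrm{id}$ in this sense. Note also that $R_{h}>0$ may be assumed, since the case $R_{h}\le 0$ is already covered by the sign criterion recalled in the Introduction.

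Fix an arbitrary competitor $\tilde{h}=u^{\frac{4}{n-2}}h\in[h]_{0}$ with $u\in C^{\infty}_{+}(M)$, and set $p=\frac{2n}{n-2}$ and $a_{n}=\frac{4(n-1)}{n-2}$. Because $H_{g}=0$, the conformal transformation law for scalar curvature together with the minimal-boundary variational characterisation of the relative Yamabe constant lets me write the energy without any boundary integral, so that
\[
\mathcal{E}(\tilde{h})=\frac{\displaystyle\int_{M}\bigl(a_{n}|\nabla_{h}u|_{h}^{2}+R_{h}u^{2}\bigr)\,dv_{h}}{\Bigl(\displaystyle\int_{M}u^{p}\,dv_{h}\Bigr)^{\frac{n-2}{n}}}.
\]
Using $dv_{h}=\gamma\,dv_{g}$ I would replace every measure by $dv_{g}$, which pulls out a single factor $\gamma^{2/n}$ overall. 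The key pointwise step is that $R_{h}h\le R_{g}g$ gives $\frac{1}{R_{g}}g^{-1}\le\frac{1}{R_{h}}h^{-1}$ on $T^{*}M$, hence $|\nabla_{h}u|_{h}^{2}\ge\frac{R_{h}}{R_{g}}|\nabla_{g}u|_{g}^{2}$; and since the zeroth-order term can be written $R_{h}u^{2}=\frac{R_{h}}{R_{g}}\,R_{g}u^{2}$, the common factor $\frac{R_{h}}{R_{g}}$ comes out of the \emph{entire} numerator.

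This reduces the problem to the Yamabe inequality for $g$ itself. Since $g$ is a relative Yamabe metric with $H_{g}=0$ and constant scalar curvature, $Y(M,\partial M,[g]_{0})=\mathcal{E}(g)=R_{g}\,\mathrm{Vol}_{g}(M)^{2/n}$, and the minimal-boundary variational characterisation gives, for \emph{every} positive $u$,
\[
\int_{M}\bigl(a_{n}|\nabla_{g}u|_{g}^{2}+R_{g}u^{2}\bigr)\,dv_{g}\ \ge\ R_{g}\,\mathrm{Vol}_{g}(M)^{2/n}\Bigl(\int_{M}u^{p}\,dv_{g}\Bigr)^{\frac{n-2}{n}}.
\]
Feeding this into the previous display and collecting the factors $\gamma^{2/n}$ yields $\mathcal{E}(\tilde{h})\ge R_{h}\,\mathrm{Vol}_{h}(M)^{2/n}=\mathcal{E}(h)$; as $\tilde{h}$ was arbitrary, $h$ is a relative Yamabe metric. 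I expect the main obstacle to be precisely the bookkeeping around the diffeomorphism: one must match the measure and the gradient to the \emph{same} background metric after the change of variables, so that the single tensor inequality \eqref{eq:eq1} controls both terms at once --- this is exactly why the volume-form normalisation $dv_{\varphi^{*}h}=\gamma\,dv_{g}$ must accompany \eqref{eq:eq1}. A secondary point to justify carefully is the vanishing of the boundary contribution, i.e.\ that for the minimal-boundary problem the relative Yamabe constant is the infimum of the above quotient over all positive functions, with no constraint on $\nu_{g}u$.

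For the uniqueness statement under the strict inequality \eqref{eq:eq2}, I would trace the equality case. If $\tilde{h}=u^{\frac{4}{n-2}}h$ is also a relative Yamabe metric then every inequality above must be an equality; but when $R_{h}h<R_{g}g$ strictly, the gradient estimate $|\nabla_{h}u|_{h}^{2}\ge\frac{R_{h}}{R_{g}}|\nabla_{g}u|_{g}^{2}$ is strict wherever $du\neq 0$. Equality therefore forces $du\equiv 0$, so $u$ is constant and $\tilde{h}$ is a positive constant multiple of $h$. Hence $h$ is the unique relative Yamabe metric in $[h]_{0}$ up to scaling, which is the assertion of the theorem.
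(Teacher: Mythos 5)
Your proposal is correct and follows essentially the same route as the paper's proof: reduce to $\varphi=\mathrm{id}$, expand $\mathcal{E}(u^{\frac{4}{n-2}}h)$ as the conformal energy quotient with no boundary term (via the Neumann condition and Proposition~\ref{prop1}), use $R_{h}h\le R_{g}g$ to get $|\nabla u|_{h}^{2}\ge\frac{R_{h}}{R_{g}}|\nabla u|_{g}^{2}$ and the volume normalisation to compare with the Yamabe inequality for $g$, then trace the equality case for uniqueness. The only (harmless) additions are your explicit remark that $R_{h}\le 0$ is already covered by the sign criterion and your flagging of the unconstrained variational characterisation, both of which the paper handles via Proposition~\ref{prop1} and the standing hypotheses.
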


This paper organized as follows.
In Section 2, we recall some background materials and prove Theorem \ref{theo1}.
In Section 3, we give some examples of non-Einstein relative Yamabe metrics.

\section{Backgrounds and the proof of Theorem~\ref{theo1}}
~~Let $M$ be a compact connected smooth manifold of dimension $n \ge 3$ with non-empty smooth boundary $\partial M.$
As pointed out in {\cite[Lemma~2.1]{akutagawa2001notes}}, 
the relative Yamabe constant $Y(M, \partial M; \Bar{C})$ is characterized as follows:
\begin{prop}[{\cite[Lemma~2.1]{akutagawa2001notes}}]
\label{prop1}

For any fixed $g \in \Bar{C}_{0},$
\begin{equation*}
\begin{split}
Y \left( M, \partial M; \Bar{C} \right) &= \inf_{h \in \Bar{C}_0} \mathcal{E} (h) 
= \inf_{u \in C^{\infty}_{+}(M) ,~\nu_{g} (u) |_{\partial M}=0} \mathcal{E} (u^{\frac{4}{n - 2}} g) \\
&= \inf_{u \in C^{\infty}_{+}(M) ,~\nu_{g} (u) |_{\partial M}=0 } \frac{\bigintss_{M} \Bigl( \frac{4(n-1)}{n-2} \left| du \right|^{2}_{g} + R_{g}u^{2} \Bigr) dv_{g}}{\left( \bigintss_{M} u^{\frac{2n}{n-2}}dv_{g} \right)^{\frac{n-2}{n}}} \\
&= \inf_{f \in L^{1,2}\left( M \right) ,~f \not\equiv 0} \frac{\bigintss_{M} \Bigl( \frac{4(n-1)}{n-2} \left| df \right|^{2}_{g} + R_{g}f^{2} \Bigr) dv_{g}}{\left( \bigintss_{M} |f|^{\frac{2n}{n-2}}dv_{g} \right)^{\frac{n-2}{n}}},
\end{split}
\end{equation*}
where $L^{1,2}(M)$ denotes the Sobolev space of square-integrable functions on $M$ up to their first weak derivatives
(see \cite{aubin2013some} for example).
\end{prop}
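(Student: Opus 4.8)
The plan is to establish the four expressions in turn: the first is definitional, the middle two follow from the conformal transformation laws, and the last requires a genuine approximation argument. Throughout I fix the base relative metric $g \in \Bar{C}_0$, so that $H_g = 0$ on $\partial M$, and I exploit that the displayed quotient is invariant under the rescaling $u \mapsto \lambda u$, which lets me work projectively. The first equality $Y(M, \partial M; \Bar{C}) = \inf_{h \in \Bar{C}_0} \mathcal{E}(h)$ is simply the definition of the relative Yamabe constant recalled in the introduction.

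For the second equality I would parametrize $\Bar{C}_0$ by conformal factors. Every metric conformal to $g$ is $h = u^{4/(n-2)} g$ for some $u \in C^{\infty}_{+}(M)$, and the content is to convert the minimal-boundary constraint $H_h = 0$ into a condition on $u$. Using the conformal transformation law for the mean curvature, which (up to the sign fixed by the choice of unit normal) reads $\frac{2}{n-2}\,\nu_g(u) + H_g\,u = H_h\,u^{n/(n-2)}$ on $\partial M$, and invoking $H_g = 0$, I obtain $H_h = 0 \iff \nu_g(u) = 0$ on $\partial M$. This identifies $\Bar{C}_0$ with $\{u \in C^{\infty}_{+}(M) : \nu_g(u)|_{\partial M} = 0\}$ and yields the second equality. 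For the third equality I would insert the conformal laws for the scalar curvature and the volume measure. Writing $L_g = -\frac{4(n-1)}{n-2}\Delta_g + R_g$ for the conformal Laplacian, one has $R_h = u^{-(n+2)/(n-2)} L_g u$ and $dv_h = u^{2n/(n-2)} dv_g$, so $R_h\,dv_h = (u\,L_g u)\,dv_g$ and $\int_M R_h\,dv_h = \int_M u\,L_g u\,dv_g$. A single application of Green's formula turns $\int_M u(-\Delta_g u)\,dv_g$ into $\int_M |du|_g^2\,dv_g$ plus a boundary integral proportional to $\int_{\partial M} u\,\nu_g(u)$, and the Neumann condition $\nu_g(u)|_{\partial M} = 0$ annihilates this boundary term. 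Combined with $\mathrm{Vol}_h(M)^{(n-2)/n} = \bigl(\int_M u^{2n/(n-2)} dv_g\bigr)^{(n-2)/n}$, this produces exactly the displayed quotient in $u$.

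The last equality, relaxing the test class from positive smooth functions obeying the Neumann condition to arbitrary $0 \not\equiv f \in L^{1,2}(M)$, is where the real work lies, and I expect it to be the main obstacle. One inclusion is immediate: positive smooth Neumann functions lie in $L^{1,2}(M)\setminus\{0\}$, so the infimum over the larger class can only be smaller. For the reverse inequality I would show that an arbitrary $f$ is approximated in $L^{1,2}$ by admissible $u$ along which the quotient converges to that of $f$. First replace $f$ by $|f|$, which leaves the quotient unchanged since $f^2 = |f|^2$ and $\bigl|d|f|\bigr|_g = |df|_g$ almost everywhere; then mollify to a nonnegative smooth function and add a small constant to secure strict positivity. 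The delicate point is enforcing the Neumann condition: given a smooth positive $u_0$, I would add a correction $w$ supported in a boundary collar $\partial M \times [0, \epsilon)$, for instance $w(x,t) = -\nu_g(u_0)(x)\,t\,\chi(t/\epsilon)$ with a fixed cutoff $\chi$, which forces $\nu_g(u_0 + w)|_{\partial M} = 0$ while having $\|w\|_{L^{1,2}} \to 0$ and $\|w\|_{L^{\infty}} \to 0$ as $\epsilon \to 0$, so that positivity survives.

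Continuity of the numerator on $L^{1,2}(M)$ together with continuity of the denominator, via the critical Sobolev embedding $L^{1,2}(M) \hookrightarrow L^{2n/(n-2)}(M)$, then forces the quotients to converge; since the approximants are fixed away from $f \not\equiv 0$ the denominator stays bounded below, so the smooth-Neumann infimum is bounded above by the quotient of $f$, and as $f$ was arbitrary the two infima coincide. The subtlety worth emphasizing is precisely that the exponent $2n/(n-2)$ is critical, so only continuity, not compactness, of the embedding is available; fortunately continuity is all that this approximation scheme requires, and in particular no existence theorem for relative Yamabe minimizers is needed to prove the characterization.
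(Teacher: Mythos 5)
Your proposal is correct, but note that the paper itself gives no proof of this proposition at all: it is quoted verbatim from Akutagawa's Lemma~2.1 in \cite{akutagawa2001notes}, so there is no internal argument to compare against. Your route is the standard one from that literature (and from Escobar \cite{escobar1992yamabe}, whose invariant $Q(M,\Bar{C})$ the remark after the proposition identifies with $Y(M,\partial M;\Bar{C})$): the identification of $\Bar{C}_0$ with positive conformal factors satisfying the Neumann condition via the boundary transformation law $\frac{2}{n-2}\nu_g(u) + H_g u = H_h u^{n/(n-2)}$, the conformal-Laplacian computation $R_h\,dv_h = (u\,L_g u)\,dv_g$ followed by Green's formula with the boundary term killed by $\nu_g(u)|_{\partial M}=0$ — this last computation is in fact carried out explicitly inside the paper's proof of Theorem~\ref{theo1}, in the identical form, so your second and third equalities match what the author actually does there. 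The only genuinely nontrivial step, the relaxation to $L^{1,2}(M)\setminus\{0\}$, you handle correctly: replacing $f$ by $|f|$ using $\bigl|d|f|\bigr|_g = |df|_g$ a.e., mollifying (which on a compact manifold with boundary requires smoothness up to $\partial M$, obtainable by extending across a collar — worth a sentence, but standard), adding a small constant for positivity, and repairing the Neumann condition by the collar correction $w(x,t) = -\nu_g(u_0)(x)\,t\,\chi(t/\epsilon)$, whose $L^{1,2}$ and $L^\infty$ norms are $O(\epsilon^{1/2})$ and $O(\epsilon)$ respectively for each fixed $u_0$, followed by a diagonal argument. Your observation that only continuity, not compactness, of the critical embedding $L^{1,2}(M) \hookrightarrow L^{2n/(n-2)}(M)$ is needed is exactly the right point to emphasize, and your argument correctly avoids invoking any existence theory for minimizers. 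In short: a complete and correct self-contained proof of a statement the paper only cites.
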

\begin{rema}
From this proposition, the relative Yamabe constant $Y (M, \partial M; \Bar{C})$ coincides with the conformal invariant
$Q(M) = Q(M,\Bar{C})$ of $\Bar{C}$ (up to the positive factor $\frac{4(n-1)}{n-2}$) defined by Escobar \cite{escobar1992yamabe}.
\end{rema}
Each relative Yamabe metric $g \in \Bar{C}_{0}$ has constant scalar curvature $R_{g} = Y (M, \partial M; \Bar{C}) \cdot \mathrm{Vol}_{g}(M)^{-2/n}$ with $H_{g} = 0$
on $\partial M$.
Conversely, like the case of closed manifolds, 
a relative metric $g$ with $R_{g} = \mathrm{const}$ is a relative Yamabe metric if $R_{g} \le 0$.
In the case of closed manifolds, it is also known that a metric with constant scalar curvature is a Yamabe metric if it is an Einstein metric (see \cite{obata1962certain}, \cite{obata1971conjectures}).
On the other hand, in the case of $\partial M \neq \emptyset$, there is an Obata-type theorem for manifolds with \textit{totally~geodesic~boundary} by Escobar as follows:
\begin{theo}[{\cite[Theorems~3.2,~4.1]{escobar1990uniqueness}}]
\label{theo1.5}
Let $g$ be an Einstein metric of positive scalar curvature on a compact $n$-manifold $M$ with \textit{totally~geodesic~boundary}.
Then, for any constant scalar curvature relative metric $h \in [g]_{0}$ , the following uniqueness result holds~:

(1)~If $(M,[g])$ is conformally equivalent to $(S^{n}_{+},[g_{S}]),$ then there exist a homothety
$\Phi: (S^{n}_{+},g_{S}) \rightarrow (M,[g])$ and a conformal transformation $\varphi \in \mathrm{Conf}(S^{n}_{+},[g_{S}])$
such that $\Phi^{*} h = \varphi^{*}(\Phi^{*}g)$. Here, $g_{S}$ denotes the standard metric on $S^{n}_{+.}$

(2)~If $(M,[g])$ is not conformally equivalent to $(S^{n}_{+},[g_{S}])$ , then, up to homothety, $h = g$.
\end{theo}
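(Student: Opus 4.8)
The plan is to run Obata's rigidity argument in the boundary setting, the governing principle being that the \emph{totally geodesic} boundary together with the relative (minimal) condition annihilates exactly the boundary terms that would obstruct the closed-manifold proof. Write the comparison metric as $h = u^{4/(n-2)}g$ with $u \in C^{\infty}_{+}(M)$ and set $\phi := u^{-2/(n-2)}$, so that $h = \phi^{-2}g$; this (rather than the $u^{2/(n-2)}$ normalization) is the convention in which the conformal transformation law for the trace-free Ricci tensor carries no gradient term. The hypotheses then translate into two facts about $\phi$: the constancy of $R_h$ is a second-order elliptic equation on $M$, while $H_g = 0$ together with $H_h = 0$ (from $h \in [g]_0$) and the vanishing second fundamental form of $\partial M$ force the Neumann condition $\nu_g(\phi) = 0$ on $\partial M$.

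First I would record the conformal identity relating the trace-free Ricci tensors. In the convention $h = \phi^{-2}g$ one has $(\mathrm{Ric}_h)^0 = (\mathrm{Ric}_g)^0 + (n-2)\phi^{-1}(\nabla^2_g\phi)^0$, where $(\,\cdot\,)^0$ denotes the (conformally invariant) trace-free part. Since $g$ is Einstein, $(\mathrm{Ric}_g)^0 = 0$, and the identity collapses to
\[
(\nabla^2_g\phi)^0 = \tfrac{\phi}{\,n-2\,}\,(\mathrm{Ric}_h)^0 .
\]
I would then pair this with $(\nabla^2_g\phi)^0$, integrate over $M$ against $dv_g$, and integrate by parts once. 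Using that $R_h$ is constant---so that $\mathrm{Ric}_h$, hence $(\mathrm{Ric}_h)^0$, is divergence-free by the contracted second Bianchi identity---and that $g$ is Einstein, the interior terms reorganize so that a positive multiple of $\int_M \phi\,|(\nabla^2_g\phi)^0|^2\,dv_g$ is expressed purely as a boundary integral over $\partial M$.

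The heart of the argument is this boundary integral, which I expect to be the main obstacle. Integration by parts produces a term over $\partial M$ built from the mixed normal--tangential components of $\nabla^2_g\phi$ (and of $(\mathrm{Ric}_h)^0$) contracted with $\nabla\phi$. The Neumann condition makes $\nabla\phi$ tangential along $\partial M$, and for a totally geodesic boundary a direct computation gives $\nabla^2_g\phi(\nu_g, X) = 0$ for every tangent field $X$ (the shape operator vanishes and $\nu_g(\phi)=0$ is constant along $\partial M$); likewise $\mathrm{Ric}_g(\nu_g, X) = 0$ by Codazzi. Hence the boundary integral vanishes, the identity forces $(\nabla^2_g\phi)^0 \equiv 0$ on $M$, i.e. $\nabla^2_g\phi = \tfrac{\Delta_g\phi}{n}\,g$, together with $\nu_g(\phi) = 0$.

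It remains to extract the dichotomy. Taking the divergence of $\nabla^2_g\phi = \tfrac{\Delta_g\phi}{n}g$ and commuting derivatives via the Einstein condition shows $\Delta_g\phi$ is an affine function of $\phi$, so after a constant shift $\widetilde\phi := \phi - c$ one reaches the Obata--Tashiro equation $\nabla^2_g\widetilde\phi = -\tfrac{R_g}{n(n-1)}\,\widetilde\phi\,g$ with positive constant (here $R_g>0$ is used). If $\widetilde\phi$ is constant, then $u$ is constant and $h$ is a positive constant multiple of $g$, which is conclusion (2). If $\widetilde\phi$ is non-constant, I would invoke the rigidity for this equation under the Neumann condition: either by reflecting $\widetilde\phi$ and $g$ evenly across the totally geodesic boundary to the double and applying the closed Obata theorem (the reflected metric being a smooth Einstein metric by analyticity of Einstein metrics and the vanishing second fundamental form), or by directly analyzing the gradient flow of $\widetilde\phi$, where $\nu_g(\widetilde\phi)=0$ identifies $\partial M$ with the critical ``equatorial'' level set. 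Either route shows $(M,g)$ is homothetic to the standard hemisphere $(S^n_+, g_S)$, with the non-constant $\phi$ realized as the conformal factor of some $\varphi \in \mathrm{Conf}(S^n_+, [g_S])$; this is conclusion (1). The points demanding care are the precise bookkeeping in passing between $\mathrm{div}_g$ and $\mathrm{div}_h$ for the trace-free Ricci tensor and, in conclusion (1), the explicit matching of $\phi$ with a conformal transformation of the hemisphere.
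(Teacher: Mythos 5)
The paper offers no proof of this statement: it is imported verbatim from Escobar's Theorems 3.2 and 4.1 in \cite{escobar1990uniqueness}, so there is nothing internal to compare against. Your argument is, in outline, exactly the Obata-type proof underlying Escobar's result --- the conformal identity $(\nabla^2_g\phi)^0 = \tfrac{\phi}{n-2}(\mathrm{Ric}_h)^0$, the integration by parts using $\mathrm{div}\,(\mathrm{Ric}_h)^0=0$, the vanishing of the boundary term via $\nu_g(\phi)=0$ and the totally geodesic hypothesis, and the reduction to the Obata--Tashiro equation --- and it is correct as a sketch, with the two points you flag (the $\mathrm{div}_g$ versus $\mathrm{div}_h$ bookkeeping and the identification of nonconstant solutions with conformal factors on $S^n_+$) being precisely where the remaining routine work lies.
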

On the other hand, in \cite{akutagawa2}, Akutagawa gave
a different rigidity theorem as follows (which is released from the assumption that the boundary is totally geodesic):
\begin{theo}[{\cite[Theorem~1.1]{akutagawa2}}]
\label{theo2}
Let $\bar{g}$ be a relative positive Einstein metric on a compact $n$-manifold $M$ with boundary.
Then, for any relative Einstein metric $\check{g} \in [\bar{g}]_{0},$
the following holds:

\noindent
(1)~Assume that $\bar{g}$ is a metric of positive constant curvature, and set $g := \bar{g}|_{\partial M}.$

(1.1)~If $(\partial M, [g])$ is conformally equivalent to $(S^{n-1}, [g_{S^{n-1}}]),$
then there exist

a homothety $\Phi : (S^{n}_{+}, g_{S}) \rightarrow (M, \bar{g})$ and a conformal transformation 

$\phi \in \mathrm{Conf}(S^{n}_{+}, [g_{S}])$
such that $\Phi^{*} \check{g} = \phi^{*}(\Phi^{*} \bar{g}).$

(1.2)~If $(\partial M, [g])$ is not conformally equivalent to $(S^{n-1}, [g_{S^{n-1}}]),$
then up to 

rescaling,
$\check{g} = \bar{g}.$

\noindent
(2)~If $\bar{g}$ is not a metric of positive constant curvature, then, up to rescaling, $\check{g} = \bar{g}.$
\end{theo}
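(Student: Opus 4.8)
The plan is to run an Obata--Reilly argument adapted to the minimal boundary. Write $\check g = \phi^{-2}\bar g$ for a positive function $\phi \in C^{\infty}(M)$, which exists because $\check g \in [\bar g]_{0}$. The first step is to extract a pointwise equation from the two Einstein conditions. Using the conformal transformation law for the trace-free Ricci tensor,
\[
\mathring{\mathrm{Ric}}_{\check g} = \mathring{\mathrm{Ric}}_{\bar g} + (n-2)\,\phi^{-1}\bigl(\mathrm{Hess}_{\bar g}\phi - \tfrac{1}{n}(\Delta_{\bar g}\phi)\,\bar g\bigr),
\]
together with the vanishing of both trace-free Ricci tensors (since $\bar g$ and $\check g$ are Einstein), I obtain the classical Obata equation $\mathrm{Hess}_{\bar g}\phi = \tfrac{1}{n}(\Delta_{\bar g}\phi)\,\bar g$ on $\mathrm{int}\,M$. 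Thus $\phi$ is a concircular scalar field in the interior.

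The second step is to encode the boundary hypotheses. Both $\bar g$ and $\check g$ are relative metrics, so $H_{\bar g} = H_{\check g} = 0$ on $\partial M$; translating the conformal transformation law of the mean curvature then yields the Neumann condition $\nu_{\bar g}(\phi) = 0$ on $\partial M$. Equivalently, the gradient $\nabla^{\bar g}\phi$ is everywhere tangent to $\partial M$, a constraint I will exploit in the boundary analysis.

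The third step is the rigidity dichotomy, where the main work lies. I would integrate the squared trace-free Hessian and apply Reilly's formula for $\phi$ on $(M,\bar g)$; the Neumann condition kills the terms $\nu(\phi)\,\Delta_{\partial M}\phi$ and $H\,\nu(\phi)^2$, leaving only the boundary integral $\int_{\partial M}\mathrm{II}(\nabla_{\partial}\phi,\nabla_{\partial}\phi)\,dA$. Combined with the interior identity $|\mathring{\mathrm{Hess}}_{\bar g}\phi|^2 = 0$ and the Einstein condition (used to rewrite $\mathrm{Ric}(\nabla\phi,\nabla\phi)$), the problem reduces to the vanishing of this boundary term. When $\phi$ is nonconstant, the structure of the concircular field together with the Einstein condition forces $\bar g$ to be of constant positive curvature; hence if $\bar g$ is \emph{not} of constant curvature, $\phi$ must be constant, i.e. $\check g = \bar g$ up to rescaling, which proves (2).

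Finally, in the constant-curvature case $(M,\bar g)$ is a domain in the round sphere, and a nonconstant concircular field is (up to scale) the restriction of a linear height function. The Neumann condition $\nu_{\bar g}(\phi)=0$ forces the level structure of $\phi$ to meet $\partial M$ orthogonally; I would show this compels $\partial M$ to be a totally geodesic equatorial sphere, so $(M,\bar g) = (S^{n}_{+}, g_{S})$ and $\phi$ generates a conformal transformation $\phi \in \mathrm{Conf}(S^{n}_{+},[g_{S}])$, giving the homothety-plus-conformal-map conclusion of (1.1). The obstruction to a nonconstant $\phi$ is precisely the conformal type of the boundary: when $(\partial M,[g])$ is not conformally equivalent to $(S^{n-1},[g_{S^{n-1}}])$ no such $\phi$ exists and rigidity $\check g = \bar g$ follows, proving (1.2). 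I expect the main obstacle to be the boundary integral in the Reilly identity: since $\partial M$ is only assumed minimal (not totally geodesic, this being the point that releases Escobar's hypothesis), the term $\int_{\partial M}\mathrm{II}(\nabla_{\partial}\phi,\nabla_{\partial}\phi)\,dA$ carries no a priori sign, and the crux is to show—using the tangency of $\nabla\phi$ and the concircular structure—that it vanishes and, when $\phi$ is nonconstant, that the boundary is in fact totally geodesic.
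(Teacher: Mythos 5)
A preliminary remark: the paper does not prove this statement at all --- Theorem \ref{theo2} is quoted verbatim from Akutagawa's preprint \cite{akutagawa2}, so your attempt can only be judged on its own terms, not against an in-paper argument. Your first two steps are sound: since both $\bar g$ and $\check g$ are Einstein, the conformal transformation law for the trace-free Ricci tensor gives the concircular equation $\mathrm{Hess}_{\bar g}\phi = \tfrac{1}{n}(\Delta_{\bar g}\phi)\,\bar g$ \emph{pointwise}, and $H_{\bar g}=H_{\check g}=0$ does translate into $\nu_{\bar g}(\phi)=0$. But precisely because the trace-free Hessian already vanishes pointwise, your Reilly-formula step is vacuous: integrating $|\mathring{\mathrm{Hess}}_{\bar g}\phi|^{2}=0$ produces no information, and the theorem does not ``reduce to the vanishing of the boundary term'' $\int_{\partial M}\mathrm{II}(\nabla_{\partial}\phi,\nabla_{\partial}\phi)\,dA$. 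That boundary term is the crux of the harder setting of Escobar's Theorem \ref{theo1.5}, where the conformal metric is only assumed to have constant scalar curvature; here it is a red herring.

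The genuine gap is in case (1): your dichotomy is keyed to whether $\phi$ is constant, whereas the theorem's dichotomy is keyed to the conformal type of $(\partial M,[g])$. Part (1.1) asserts in particular that $(M,\bar g)$ itself is homothetic to $(S^{n}_{+},g_{S})$ whenever the boundary is conformally round --- and this must be established even when $\phi$ is constant, in which case your concircular machinery yields nothing. What is missing is exactly the implication: constant positive curvature $+$ minimal boundary $+$ boundary conformally equivalent to $(S^{n-1},[g_{S^{n-1}}])$ $\Rightarrow$ hemisphere. That this cannot be extracted from the conformal factor alone is shown by the paper's own Example (2): the solid torus $\bar V_{1}\subset S^{3}$ bounded by the minimal Clifford torus is a constant-curvature relative Einstein manifold with minimal boundary that is \emph{not} a hemisphere, so the conclusion hinges on the boundary's conformal class (in dimension $3$ one expects Almgren-type rigidity of minimal $2$-spheres in $S^{3}$, plus a developing-map argument in general). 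Relatedly, in case (2) you assert that a nonconstant concircular field on an Einstein manifold forces constant positive curvature; this Brinkmann--K\"uhnel-type conclusion holds a priori only on the open set where $d\phi\neq 0$, and extending it to all of $M$ requires, e.g., the analyticity of Einstein metrics --- a small but real step you omit. In sum, (1.2) and (2) can plausibly be completed along your lines, but (1.1), the heart of the theorem, is not addressed by your sketch.
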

For further details on the relative Yamabe problem, refer to 
\cite{akutagawa2001notes}, \cite{akutagawa2}, \cite{akutagawarelative}, \cite{escobar1990uniqueness} or \cite{escobar1992yamabe}.

In \cite{akutagawa2}, Akutagawa also gave an example of a metric which is not relative Einstein but relative Yamabe
(see {\cite[Counterexample]{akutagawa2}} or Section 3 of this paper). 
On the other hand, in Section 3, we will also give another examples
of such metrics.
In the following, we will give the proof of Theorem \ref{theo1},
which is similar to the proof of {\cite[Theorem]{shin1994examples}}.
\begin{proof}[Proof of Theorem~\ref{theo1}]
Since the scalar curvature is preserved under the pull-back action of diffeomorphisms, it is enough to consider the case that 
$\varphi = id_{M}$.

Let $\bar{h}=u^{\frac{4}{n-2}}h \in [h]$, $u \in C^{\infty}_{+}(M)$ with $\nu_{h}(u) |_{\partial M}=0$. 
From Proposition~\ref{prop1}, it is enough to prove for this situation.
Then
\[
\begin{split}
\mathcal{E} (\bar{h}) = \frac{\bigintss_{M} R_{\bar{h}}dv_{\bar{h}}}{Vol_{\bar{h}}(M)^{\frac{n-2}{n}}}
&= \frac{\bigintss_{M} u^{-\frac{n+2}{n-2}} \left( - \frac{4(n - 1)}{n - 2}\Delta_{h} u + R_{h}u \right)u^{\frac{2n}{n-2}}dv_{h}}{\left( \bigintss_{M} u^{\frac{2n}{n-2}}dv_{h} \right)^{\frac{n-2}{n}}} \\
&= \frac{\bigintss_{M} \left( -\frac{4(n-1)}{n-2} u\Delta_{h} u + R_{h}u^{2} \right)dv_{h}}{\left( \bigintss_{M} u^{\frac{2n}{n-2}}dv_{h} \right)^{\frac{n-2}{n}}},
\end{split}
\]
where $- \Delta_{h}$ denotes the non-negative Laplacian with respect to $h.$
Thus, using integration by parts and $\nu_{h}(u) |_{\partial M}=0$, we obtain
\begin{equation*}
\mathcal{E} (\bar{h}) = \frac{\bigintss_{M} \Bigl( \frac{4(n-1)}{n-2} \left| \nabla u \right|^{2}_{h} + R_{h}u^{2} \Bigr) dv_{h}}{\left( \bigintss_{M} u^{\frac{2n}{n-2}}dv_{h} \right)^{\frac{n-2}{n}}}.
\end{equation*}

Using $\frac{R_{h}}{R_{g}} \left| \nabla u \right|^{2}_{g} \le \left| \nabla u \right|^{2}_{h}$ (from~the assumption (\ref{eq:eq1}) in Theorem \ref{theo1}), we obtain
\begin{equation*}
\begin{split}
\mathcal{E} (\bar{h})&= \frac{\bigintss_{M} \left( \frac{4(n-1)}{n-2} \left| \nabla u \right|^{2}_{h} + R_{h}u^{2} \right) \gamma dv_{g}}{\left( \bigintss_{M} u^{\frac{2n}{n-2}} \gamma dv_{g} \right)^{\frac{n-2}{n}}} \\
&\ge \gamma^{1-\frac{n-2}{n}} \frac{R_{h}}{R_{g}} \frac{\bigintss_{M} \left( \frac{4(n-1)}{n-2} \left| \nabla u \right|^{2}_{g} + R_{g}u^{2} \right) dv_{g}}{\left( \bigintss_{M} u^{\frac{2n}{n-2}} dv_{g} \right)^{\frac{n-2}{n}}} \\
&= \gamma^{1-\frac{n-2}{n}} \frac{R_{h}}{R_{g}}  \mathcal{E} \left( u^{\frac{4}{n-2}}g \right).
\end{split}
\end{equation*}
Since $g$ is a relative Yamabe metric, $R_{g}>0$ and from (\ref{eq:eq1}),
\begin{equation}
\label{eq:eq3}
\mathcal{E} (\bar{h}) \ge \gamma^{1-\frac{n-2}{n}} \frac{R_{h}}{R_{g}} \mathcal{E} (g)
\end{equation}
\[
\begin{split}
= \gamma^{1-\frac{n-2}{n}} \frac{R_{h}}{R_{g}} \frac{\bigintss_{M} R_{g} dv_{g}}{\left( \bigintss_{M} dv_{g} \right)^{\frac{n-2}{n}}}
= \frac{\bigintss_{M} R_{h} \gamma dv_{g}}{\left( \bigintss_{M} \gamma dv_{g} \right)^{\frac{n-2}{n}}}
&= \frac{\bigintss_{M} R_{h} dv_{h}}{\left( \bigintss_{M} dv_{h} \right)^{\frac{n-2}{n}}} \\
&= \mathcal{E} (h).
\end{split}
\]
Therefore, by the definition of the relative Yamabe metrics, $h$ is a relative Yamabe metric on $M$.

In the above argument, if we assume that $\bar{h}$ is also a relative Yamabe metric,
then it must be $\mathcal{E} (\bar{h})=\mathcal{E} (h).$
In paticular, the inequality in (\ref{eq:eq3}) must be an equality.
Hence, if we assume (\ref{eq:eq2}) in Theorem \ref{theo1} in addition, then $\nabla u \equiv 0$ on $M$.
Therefore, $u \equiv \mathrm{const}$ on $M$ since $M$ is connected.
This means that $h$ is a unique relative Yamabe metric up to positive constant in $[h]_{0}$.
\end{proof}

We have a corollary for Theorem \ref{theo1}.

\begin{coro}
\label{coro1}
Let $M$ be the one as in Theorem~\ref{theo1}
and $\bigl\{ g_t \bigm| T \le t \le T' \bigr\}~(T < T')$ a smooth variation in $\mathscr{M}$ satisfying the following conditions:

(1) $R_{g_t} = \mathrm{const}$ on M and $H_{g_t} = 0$ on $\partial M$ for all $t \in [T,T']$,

(2) $g_{T}$ is a relative Yamabe metric,

(3) $R_{g_t} > 0$ on $M$ for all $t \in [T,T')$,

(4) $R_{g_{T'}}=0$ on $M$.

\noindent
Then there exists a positive constant $\delta > 0$ such that $g_t$ is also a relative Yamabe metric for every $t \in [T'- \delta, T')$.
\end{coro}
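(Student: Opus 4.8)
The plan is to deduce the corollary from Theorem~\ref{theo1} by using the \emph{fixed} base metric $g := g_T$ and comparing it with $h := g_t$ for $t$ close to $T'$. The point of the hypothesis $R_{g_{T'}}=0$ is that it makes the comparison tensor $R_{g_t}g_t$ tend to $0$ as $t\to T'$, so that the pointwise inequality~(\ref{eq:eq2}) becomes available essentially for free; the only genuine work is to arrange the volume-matching diffeomorphism.

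First I would record the two structural ingredients. By~(2) the metric $g_T$ is a relative Yamabe metric, and by~(3) (taken at $t=T$) it satisfies $R_{g_T}>0$; this will play the role of $g$ in Theorem~\ref{theo1}. By~(1) and~(3), for every $t\in[T,T')$ the metric $g_t$ is a relative metric ($H_{g_t}=0$) of constant scalar curvature $R_{g_t}>0$; this is the candidate $h$. Next comes the tensor inequality. Since $t\mapsto g_t$ is a smooth variation and $M$ is compact, $g_t\to g_{T'}$ uniformly on $M$, and by~(1) and~(4) the constant $R_{g_t}$ converges to $R_{g_{T'}}=0$. Hence the symmetric $2$-tensor $R_{g_t}g_t\to 0$ uniformly, whereas $R_{g_T}g_T$ is a fixed positive-definite tensor, uniformly bounded below on the compact $M$. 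Consequently there is $\delta>0$ with
\begin{equation*}
R_{g_t}g_t < R_{g_T}g_T \quad\text{on } M,\qquad t\in[T'-\delta,T'),
\end{equation*}
so even the strict inequality~(\ref{eq:eq2}) holds.

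It then remains to supply the diffeomorphism required by Theorem~\ref{theo1}. For each $t\in[T'-\delta,T')$ I would set $\gamma_t := \mathrm{Vol}_{g_t}(M)/\mathrm{Vol}_{g_T}(M)>0$, so that the volume forms $dv_{g_t}$ and $\gamma_t\,dv_{g_T}$ have equal total mass; Moser's volume-form theorem, valid on the compact manifold with boundary $M$, then produces a diffeomorphism $\varphi_t$ of $M$ with $dv_{\varphi_t^{*}g_t}=\varphi_t^{*}dv_{g_t}=\gamma_t\,dv_{g_T}$. Applying Theorem~\ref{theo1} with $g=g_T$, $h=g_t$, $\varphi=\varphi_t$ and $\gamma=\gamma_t$ shows that each $g_t$ with $t\in[T'-\delta,T')$ is a relative Yamabe metric (and, by the strict case, the unique one in $[g_t]_0$ up to scaling), which is the assertion.

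The step I expect to be the main obstacle is the construction of $\varphi_t$: it is precisely the content of Moser's theorem in the bounded setting, whose sole hypothesis—equality of total volumes—is arranged by the choice of $\gamma_t$, with the diffeomorphism of $M$ automatically preserving $\partial M$. By contrast the tensor inequality is almost automatic once $R_{g_{T'}}=0$, so the heart of the argument is simply that the hypotheses of Theorem~\ref{theo1} survive a uniform limit in which the scalar curvature degenerates to zero.
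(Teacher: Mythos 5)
Your argument is correct and follows the same route as the paper: produce a volume-matching diffeomorphism via the Moser-type theorem for manifolds with boundary (the paper cites Banyaga's result for exactly this), observe that $R_{g_t}g_t\to 0$ while $R_{g_T}g_T$ is fixed and positive definite so the tensor inequality of Theorem~\ref{theo1} holds for $t$ near $T'$, and then apply the theorem with $g=g_T$, $h=g_t$. Your write-up is in fact slightly more detailed than the paper's (which does not spell out the uniform-convergence argument or note that the strict inequality~(\ref{eq:eq2}) is obtained), but the approach is identical.
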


\begin{proof}
From {\cite[TH\'EOR\`EME]{banyaga1974}}, there exists a family $\bigl\{ \varphi_t \bigm| T \le t \le T' \bigr\}$ 
of diffeomorphisms of $M$ with $\varphi_{t} (\partial M) = \partial M$
such that $dv_{\varphi^{*}_{t} g_t} = \gamma_t dv_{g_T}$ for some $\gamma_{t} \in \mathbb{R}$
which is smooth with respect to $t$.
By the assumptions of the Corollary, $g_t$ satisfies the condition (\ref{eq:eq1}) in the Theorem~\ref{theo1} when $t$ is sufficiently close to $T'$.
Hence, we can apply Theorem~\ref{theo1} to such $g_t$ and then obtain Corollary \ref{coro1}.
\end{proof}

\section{Examples of non-Einstein relative Yamabe metrics}
In this section, we shall give some examples of non-Einstein relative Yamabe metrics.

\noindent
(1)(cf. {\cite[Example~2]{shin1994examples}})~~We consider the Berger sphere ;
\begin{equation*}
\begin{split}
 S^{3}(1) \cong SU(2) &= \Bigl\{ A \in M( 2 ; \mathbb{C} ) \Bigm| \mathrm{det}A = 1,~A^{*} = A^{-1} \Bigr\} \\
&= \Biggl\{ \begin{pmatrix} z & -\bar{w} \\ w & \bar{z} \end{pmatrix} \Biggm| (z,w) {\in \mathbb{C}}^{2}, |z|^{2} + |w|^{2} = 1 \Biggr\},
\end{split}
\end{equation*}
and set
\[ X_1 := \begin{pmatrix} \sqrt{-1} & 0 \\ 0 & -\sqrt{-1} \end{pmatrix},
X_2 := \begin{pmatrix} 0 & 1 \\ -1 & 0 \end{pmatrix},
X_3 := \begin{pmatrix} 0 & \sqrt{-1} \\ \sqrt{-1} & 0 \end{pmatrix}. \]
Then, this is a left invariant orthonormal frame with respect to the standard metric on $SU(2)$.
And we define a left invariant metric $g_{s,t} \left( 1 \le s \le t \right)$ on $SU(2)$ so that
\[ g_{s,t} \left( X_1,X_1 \right) = 1,~g_{s,t} \left( X_2,X_2 \right) = s, 
~g_{s,t} \left( X_3,X_3 \right) = t,~g_{s,t} \left( X_{i},X_{j} \right) = 0~~(i \neq j). \]
Then, 
$\Bigl\{ X_1,\frac{1}{\sqrt{s}} X_2,\frac{1}{\sqrt{t}} X_3 \Bigr\}$
is an orthonormal basis of $SU(2)$ with respect to $g_{s,t}$.
Using this basis, we can derive the scalar curvature of $g_{s,t}$ as
\begin{equation*}
 R_{g_{s,t}} = \frac{2}{st} \bigl\{ 2(s+t+st) - (1+s^{2}+t^{2}) \bigr\}.
\end{equation*}
And we define a subspace $SU(2)_{+}$ of $SU(2)$ so that
\[ SU(2)_{+} := \Biggl\{ \begin{pmatrix} z & -\bar{w} \\ w & \bar{z} \end{pmatrix} \Biggm| \mathrm{Im}z \ge 0 \Biggr\}. \]
Then, $\left( SU(2)_{+},g_{s,t} |_{SU(2)_{+}} \right)$ is a Riemannain manifold with boundary $\partial SU(2)_{+}$:
\[ \partial SU(2)_{+} = \Biggl\{ \begin{pmatrix} z & -\bar{w} \\ w & \bar{z} \end{pmatrix} \Biggm| \mathrm{Im}z = 0 \Biggr\}. \]
Then, $X_1$ forms a left invariant unit normal vector field of the boundary $\partial SU(2)_{+}$ and it is minimal with respect to $g_{s,t}$ (that is, $H_{g_{s,t}} = 0$ on $\partial SU(2)_{+}$).
Hence, from Theorem \ref{theo1}, $g_{s,t}$ is a relative Yamabe metric with constant positive scalar curvature, if $t \ge s+\sqrt{s}+1$ (since $g_{1,1}$ is the standard metric, therefore it is a relative Yamabe metric).

On the other hand, the Ricci curvature of $g_{s,t}$ can be calculated as follows:
\[ \mathrm{Ric}_{g_{s,t}}\left( X_1,X_1 \right) = -\frac{1}{st} \left( -2+2t^{2}+2s^{2}-4st \right), \]
\[ \mathrm{Ric}_{g_{s,t}}\left( \frac{X_2}{\sqrt{s}},\frac{X_2}{\sqrt{s}} \right) = -\frac{1}{st} \left( 2+2t^{2}-2s^{2}-4t \right), \]
\[ \mathrm{Ric}_{g_{s,t}}\left( \frac{X_3}{\sqrt{t}},\frac{X_3}{\sqrt{t}} \right) = -\frac{1}{st} \left( 2-2t^{2}+2s^{2}-4s \right), \]
\[ \mathrm{Ric}_{g_{s,t}}\left( X_{i},X_{j} \right) = 0~(i \neq j). \]
Hence, $g_{s,t}$ is an Einstein metric if and only if $s=t=1.$
Consequently, $g_{s,t}$ is a non-Einstein relative Yamabe metric with positive scalar curvature on $SU(2)_{+}$ if $t \ge s+\sqrt{s}+1.$
\\

\noindent
(2)(cf. {\cite[Counterexample]{akutagawa2}})~~Consider the Clifford torus $\Phi({T^{2}}):$
\[
\Phi : T^{2} = S^{1} \times S^{1} \rightarrow S^{3}(1) \subset \mathbb{C}^{2},~(\theta, \phi) \mapsto \frac{1}{\sqrt{2}}(e^{\sqrt{-1}\theta}, e^{\sqrt{-1}\phi})~(0 \le \theta, \phi \le 2 \pi). 
\]
Set 
\[
V_{1} \sqcup V_{2} = S^{3}(1) - \Phi(T^{2}),
\]
and let $g_{S}$ be the round metric of constant curvature one on the $3$-sphere $S^{3}.$
Let $\Bar{V}_{i}~(i = 1,2)$ be a solid torus with minimal boundary $\partial \Bar{V}_{i} = \Phi(T^{2}).$
Then $\bar{g} := g_{S}|_{\Bar{V}_{1}}$ is a metric of constant curvature one on $\Bar{V}_{1},$
and thus it is a relative Einstein metric.
And, there exists a relative Yamabe metric $\check{g} \in [\bar{g}]_{0}$
such that
\[
\mathcal{E} (\check{g}) = Y(\Bar{V}_{1}, \partial \Bar{V}_{1}, [\bar{g}])
< Y(S^{3}_{+}, S^{2}, [g_{S}]) = \mathcal{E} (\bar{g})
\]
(see {\cite[Counterexample]{akutagawa2}} for more detail).
Hence we have $\check{g} \neq \bar{g},$
and from Theorem \ref{theo2}, (1.2),
such $\check{g}$ is not an Einstein metric.

\begin{rema}
The boundary $\partial SU(2)_{+}$ in the above example (1) is not totally geodesic with respect to $g_{s,t}$.
On the other hand, Escobar shown the rigidity Theorem \ref{theo1.5} ({\cite[Theorems~3.2,~4.1]{escobar1990uniqueness}})
as mentioned above.
But, the same statement does not hold in general for Riemannian manifolds with {\it minimal boundary}.
In fact, there are some examples of metrics which are not relative Einstein, not conformally equivalent to 
the standard hemisphere but has constant scalar curvature (see {\cite[Counterexample]{akutagawa2}}, {\cite[p.~875]{escobar1990uniqueness}}).
These are counterexamples to (2) in Theorem \ref{theo1.5}.
\end{rema}

\subsection*{Acknowledgement}
I would like to thank my supervisor Kazuo Akutagawa for suggesting the initial direction for this study,
his good advice and support.

\bigskip
\noindent
\textit{E-mail address}:~a19.fg4w@g.chuo-u.ac.jp

\noindent
\textsc{Department Of Mathematics, Chuo University \\
1-13-27 Kasuga Bunkyo-ku, Tokyo 112-8551, Japan}

\end{document}